\newtheorem{theorem}{Theorem}[section]
\newtheorem{proposition}[theorem]{Proposition}
\newtheorem{lemma}[theorem]{Lemma}
\newtheorem{corollary}[theorem]{Corollary}
\theoremstyle{definition}
\theoremstyle{remark}
\newtheorem{remark}[theorem]{Remark}
\numberwithin{equation}{section}
\newcommand{\R}{\mathbb{R}}
\newcommand{\N}{\mathbb{N}}
\newcommand{\eps}{\varepsilon}
\newcommand{\weak}{\rightsquigarrow}
\newcommand{\Exp}{\operatorname{E}}
\newcommand{\Var}{\operatorname{Var}}
\newcommand{\diff}{\,\mathrm{d}}
\begin{document}

\title{\fontsize{16}{19} On the lack of weak continuity of Chatterjee's correlation coefficient}

\author{
Axel B\"ucher\thanks{Ruhr-Universität Bochum, Fakultät für Mathematik. Email: \href{mailto:axel.buecher@rub.de}{axel.buecher@rub.de}}~\orcidlink{0000-0002-1947-1617}
\and
Holger Dette\thanks{Ruhr-Universität Bochum, Fakultät für Mathematik. Email: \href{mailto:holger.dette@rub.de}{holger.dette@rub.de}}
~\orcidlink{0000-0001-7048-474X}
}

\date{\today}

\maketitle

\begin{abstract} 
Chatterjee's correlation coefficient has recently been proposed as a new association measure for bivariate random vectors that satisfies a number of desirable properties. Among these properties is the feature that the coefficient equals one if and only if one of the variables is a measurable function of the other. As already observed in Mikusinski, Sherwood and Taylor (Stochastica, 13(1):61–74, 1992), this property implies that Chatterjee's coefficient is not continuous with respect to weak convergence. We discuss a number of negative consequences for statistical inference. In particular, we show that asymptotic tests for stochastic independence based on Chatterjee's empirical correlation coefficient, or boosted versions thereof, have trivial power against certain alternatives for which the population coefficient is one.
\end{abstract}

\noindent\textit{Keywords.} 
Association Measure;
Independence Testing;
Power Analysis;
Regression Dependence;
Shuffle of Min.

\section{Introduction}

Measuring association between real-valued random variables is an important and historical branch of stochastics that is still actively researched today (see \citealp{Cha24} for a recent review).
The scientific discussion of new such measures or previously unexplored aspects of existing measures is usually based on a set of desirable axioms that a meaningful association measure should fulfill.
The first set of such axioms was formulated in \cite{Ren59} and reads as follows: for two real-valued random variables $X$ and $Y$ defined on a common probability space, the association measure $R=R(X,Y)$ is supposed to satisfy:
\begin{enumerate}[(A)]
\item $R(X,Y)$ is defined for any $X$ and $Y$.
\item $R(X,Y)=R(Y,X)$.
\item $R(X,Y) \in [0,1]$.
\item $R(X,Y) = 0$ if and only if $X$ and $Y$ are independent.
\item $R(X,Y) = 1$ if either $X=f(Y)$ a.s.\ or $Y=f(X)$ a.s.\ for some measurable function $f$.
\item $R$ is invariant under one-to-one mappings of the margins.
\item $R(X,Y)=|\varrho|$, provided $(X,Y)$ is jointly normal with correlation $\varrho$.
\end{enumerate}
We refer to \cite{SchWol81} for some early discussions, extensions and alternatives.

Recent years have witnessed a growing interest in association measures that concentrate on the following non-symmetric `if and only if' version of (E):
\begin{enumerate}
\item[(E')] $R(X,Y) = 1$ if and only if $Y=f(X)$ a.s.\ for some measurable function $f$.
\end{enumerate}
In fact, it has been shown that (C), (D) and (E') are satisfied if $R(X,Y) =\xi(X,Y)$ denotes the population version of the famous Chatterjee correlation coefficient \citep{Cha21} 
\begin{equation}
\label{det1}
\xi(X,Y) := \frac{\int \Var( \Exp[ \bm 1(Y \ge y) \mid X]) \diff P_Y(t) }{ \int \Var(\bm 1(Y \ge y)) \diff P_Y(y)},
\end{equation}
defined for non-constant $Y$ (and hence not satisfying (A)) and
previously introduced in \cite{DetSto13} for the case where both $X$ and $Y$ have a continuous cumulative distribution function (cdf), see Theorem 1 in \cite{Cha21}.
Note in passing that the symmetrized version $R^*(X,Y)=\max(\xi(X,Y), \xi(Y,X))$ satisfies `if and only if' in (E), and, of course, (B).

A  couple of additional or alternative axioms to Renyi's  list have been proposed in \cite{SchWol81}, and the one that is most important for this note is the following:
\begin{enumerate}
\item[(H)] The measure $R$ is continuous with respect to weak convergence, that is, if $(X_k,Y_k)$ is a sequence of random variables weakly converging to $(X,Y)$, then $\lim_{k \to \infty} R(X_k, Y_k) = R(X,Y)$. 
\end{enumerate}
The authors believe that this axiom is very natural, in particular when it comes to statistics, which, on a high level, is ultimately based solely on the fact that empirical measures weakly converge to the population measure almost surely if the sample size converges to infinity \citep{Var58}. However, it has already been observed in \cite{MikShe92} that it is impossible for an association measure that satisfies (A)-(D) to satisfy both (E') and (H); see also Corollary~\ref{cor:R} below for a slightly sharper statement. In fact, the following corollary, proven in Section~\ref{sec:main}, illustrates this pitfall very clearly for the Chatterjee correlation coefficient.

\begin{corollary} \label{cor:Xi}
For any pair of independent random variables $(X, Y)$ with continuous marginal cdfs and any $\xi_0 \in [0,1]$, there exists a sequence of random variables $(X_k, Y_k)$ weakly converging to $(X,Y)$ for $k\to\infty$ such that Chatterjee's correlation coefficient satisfies $\xi(X_k, Y_k)=\xi_0$ for all $k$.
\end{corollary}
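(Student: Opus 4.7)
The plan is to first produce the sequence when $(X,Y)\sim\Pi$, the independence copula on $[0,1]^2$, and then to transfer to arbitrary continuous marginals via the probability integral transform (using that, under continuous marginals, $\xi$ depends only on the copula). The approximating copulas will be convex mixtures of a shuffle of Min (contributing $\xi=1$) with $\Pi$ itself (contributing $\xi=0$), with the mixing weight tuned so that the mixture has exactly $\xi=\xi_0$.

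For each $k$, let $M_k$ denote a shuffle of Min in the sense of \cite{MikShe92}: a copula supported on the graph of a measure-preserving, piecewise-linear bijection $f_k:[0,1]\to[0,1]$, with the graph lying within Hausdorff distance $1/k$ of every point of $[0,1]^2$. Since $V=f_k(U)$ a.s.\ under $M_k$, property (E') yields $\xi(M_k)=1$, while the Hausdorff approximation combined with the Portmanteau theorem gives $M_k\weak\Pi$. Setting $\alpha:=\sqrt{\xi_0}$, define $C_k:=\alpha M_k+(1-\alpha)\Pi$; this is again a copula with uniform marginals, and $C_k\weak\Pi$.

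To evaluate $\xi(C_k)$, represent $(U_k,V_k)\sim C_k$ through an auxiliary Bernoulli$(\alpha)$ indicator $I_k$ with $(U_k,V_k)\mid I_k=1\sim M_k$ and $(U_k,V_k)\mid I_k=0\sim\Pi$. Since both mixture components have a uniform $U$-marginal, Bayes' rule gives $\Prob(I_k=1\mid U_k)=\alpha$, so
\[
\Exp[\bm 1(V_k\ge v)\mid U_k=u]=\alpha\,\bm 1(f_k(u)\ge v)+(1-\alpha)(1-v).
\]
Because $f_k$ is measure-preserving, $\bm 1(f_k(U_k)\ge v)$ is Bernoulli$(1-v)$, whence $\Var(\Exp[\bm 1(V_k\ge v)\mid U_k])=\alpha^2\, v(1-v)=\alpha^2\,\Var(\bm 1(V_k\ge v))$; inserting this into \eqref{det1} yields $\xi(U_k,V_k)=\alpha^2=\xi_0$. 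Finally, setting $X_k:=F_X^{-1}(U_k)$ and $Y_k:=F_Y^{-1}(V_k)$ and applying the continuous mapping theorem (the quantile transforms being continuous on a set of full Lebesgue measure) gives $(X_k,Y_k)\weak(X,Y)$ with $\xi(X_k,Y_k)=\xi_0$.

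The only delicate point is the identification of the conditional expectation $\Exp[\bm 1(V_k\ge v)\mid U_k]$ under the singular copula $M_k$, which carries no Lebesgue density on $[0,1]^2$; this is handled cleanly by the Bernoulli-mixture representation above (or equivalently by working with conditional cdfs). Everything else is a direct calculation combined with the classical shuffle-of-Min approximation of $\Pi$ that already underlies the original inconsistency observation of \cite{MikShe92}.
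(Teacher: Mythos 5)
Your construction is the same as the paper's in its key design: mix a shuffle of Min with the independence copula, $C_k=\alpha M_k+(1-\alpha)\Pi$, and tune $\alpha=\sqrt{\xi_0}$ so that the mixture has coefficient exactly $\xi_0$; the transfer to general continuous margins via quantile transforms and copula-invariance of $\xi$ also matches the paper. Where you differ is in how you evaluate $\xi(C_k)$: the paper invokes the identity $\xi = 6\|\partial_1 C\|_2^2-2$ from \cite{DetSto13} and does an $L^2$ computation of $\partial_1 D_k$, while you compute $\Exp[\bm 1(V_k\ge v)\mid U_k]$ directly from the Bernoulli-$\alpha$ mixture representation, using that both components share the uniform $U$-marginal and that $f_k$ is measure preserving, so that $\Var(\Exp[\bm 1(V_k\ge v)\mid U_k])=\alpha^2 v(1-v)$ and \eqref{det1} gives $\xi(C_k)=\alpha^2$ pointwise in $v$. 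This is a correct and arguably more elementary route: it avoids the $L^2$/partial-derivative formula altogether and makes the exact value $\alpha^2$ transparent; the paper's route, on the other hand, generalizes immediately to mixtures with an arbitrary base copula $C^*$ (as needed for Proposition~\ref{prop:Xi}, where an intermediate value argument replaces the explicit choice of $\alpha$), which your conditional-expectation computation would not give so cheaply because the cross term no longer simplifies.

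One justification in your write-up is too quick: you deduce $M_k\weak\Pi$ from the fact that the support of $M_k$ is within Hausdorff distance $1/k$ of every point of $[0,1]^2$. Density of the support alone does not control the mass distribution — one can build shuffles of Min whose support is $\eps$-dense in the square but whose law stays far from $\Pi$ (put almost all strip width near one vertical profile and only slivers elsewhere). What you actually need, and what the paper uses, is the classical approximation theorem (Theorem~\ref{theo:shuffle}; see also Theorem 3.2.2 in \cite{Nel06}): shuffles of Min can be chosen converging to $\Pi$ \emph{uniformly as copulas}, and uniform convergence of the cdfs implies weak convergence. Since you are anyway appealing to the construction of \cite{MikShe92}, this is a matter of citing the right property of $M_k$ (sup-norm closeness to $\Pi$, not just support density) rather than a flaw in the overall argument; with that repair your proof is complete.
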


The result essentially shows that Chatterjee's correlation coefficient is intrinsically hard to interpret. Arbitrary large values of $\xi(X,Y)$ may be obtained for random variables which are `almost independent' in the sense that their induced law is close to that of the associated product measure of its margins, with `closeness' measured in terms of any metric that metrizes weak convergence. The main purpose of this note is to shed further light on this issue, with a particular view on statistical applications such as independence testing. 

More precisely, regarding independence testing, we show in Section~\ref{sec:independence} that any sufficiently regular test for the null hypothesis of independence has trivial power, even in a classical Neymann-Pearson type situation of testing $H_0: \xi(X,Y)=0$ against $H_1:\xi(X,Y)=1$. In an asymptotic framework this means that we can find a sequence of alternatives with trivial power which can be regarded both `local' in the sense that it converges weakly to independence, and `non-local' in the sense that we have $\xi=1$ for each alternative in the sequence. We thereby complement recent results from \cite{ShiDrtHan22} and \cite{CaoBic20}, who found trivial power for Chatterjee's asymptotic test \citep{Cha21} along certain local alternatives that converge to the null at a non-optimal rate.

A second statistical application, discussed in Section~\ref{sec:confint}, concerns the construction of (asymptotic) confidence intervals. We show that any sufficiently regular confidence interval of level $\alpha$ must be unreasonably large, with probability at least $1-2\alpha$, for any random sample from a pair of random variables with continuous margins. Likewise, meaningful uniform asymptotic confidence intervals cannot exist -- we want to stress though that no assertion is made on the more common notion of (pointwise) asymptotic confidence intervals; such intervals may well exist in a meaningful way.

Our results apply in particular to Chatterjee's empirical correlation coefficient \citep{Cha21} and versions thereof. The latter has undergone intensive research in the recent years, and many useful properties like invariance under monotone transformations of the data, low computational complexity, consistency, asymptotic normality \citep{LinHan22} or subsampling consistency \citep{DetKro24} have been established; see also \cite{Aza21, linfan2022,
LinHan24, Shi24}  for extensions and further results. Despite these impressive advances, our results show that some care is necessary when interpreting the coefficient and conducting statistical inference.

\section{Shuffles of the Min and measures of perfect dependence}
\label{sec:main}

Recall that a bivariate copula $C:[0,1]^2 \to [0,1]$ is the restriction of a bivariate cdf with standard uniform margins to the unit square \citep{Nel06}. A specific copula is given by the Fr\'echet-Hoeffding upper bound $C_{\min}(u,v) = \min(u,v)$, which corresponds to the uniform distribution on the main diagonal $D=\{(u,v) \in [0,1]^2: u=v\}$ of the unit square.

A shuffle of Min (i.e., of $C_{\min})$ is a specific bivariate copula which is, informally, constructed as follows: put mass 1 on $D$, then cut  $[0,1]^2$ vertically into a finite number of strips, shuffle the strips with perhaps some of them flipped around their vertical axes of symmetry, and then reassemble the strips to form the square again. The resulting mass distribution has uniform margins, and hence defines a unique copula. We refer to \cite{MikShe92} or Section 3.2.3 in \cite{Nel06} for details. The important aspect for this note is that the support of a shuffle of Min is the graph of a bijective function, and therefore it is is the cdf of a random vector $(X,Y)$ for which $Y$ is a bijective function of $X$. In particular, if an association measure $R$ satisfies (E'), we have $R(X,Y)=1$ if $(X,Y)$ is distributed according to a shuffle of Min. 

A well-known result from copula theory states that shuffles of Min are dense in the space of copulas with respect to the topology of uniform convergence.

\begin{theorem}[\citealp{KimSam78, MikShe92}] \label{theo:shuffle}
Let $C$ denote an arbitrary copula on $[0,1]^2$. There exists a sequence of copulas $(C_k)_k$ converging to $C$ in uniform distance such that each $C_k$ has the following property: if $(X_k,Y_k) \sim C_k$, then $Y_k=f_k(X_k)$ almost surely for some measurable function $f_k$. More precisely, each $C_k$ is a shuffle of Min.
\end{theorem}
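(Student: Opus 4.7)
The plan is to approximate an arbitrary copula $C$ by a sequence of shuffles of Min obtained from a grid discretization of its doubly stochastic measure combined with the Birkhoff--von Neumann theorem. For $n\in\N$, partition $[0,1]$ into the half-open intervals $J_i = ((i-1)/n, i/n]$, $i=1,\dots,n$, and set $a_{ij}^{(n)} = n\,\Prob\{(X,Y)\in J_i\times J_j\}$ for $(X,Y)\sim C$. Since $C$ has uniform margins, every row and column sum of $A^{(n)} = (a_{ij}^{(n)})$ equals one, so $A^{(n)}$ is a doubly stochastic matrix.

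Applying the Birkhoff--von Neumann theorem, decompose $A^{(n)} = \sum_{k=1}^K \lambda_k P_k$ into a convex combination of permutation matrices with associated permutations $\pi_k$ of $\{1,\dots,n\}$. Build an approximating copula $C_n$ by further partitioning each $J_i$ into $K$ consecutive sub-intervals $J_{i,1},\dots,J_{i,K}$ of widths $\lambda_1/n,\dots,\lambda_K/n$, and on the sub-strip $J_{i,k}\times[0,1]$ placing uniform mass $\lambda_k/n$ on a diagonal line segment contained in the cell $J_{i,k}\times J_{\pi_k(i)}$. To realize the resulting measure as a bona fide shuffle of Min one rearranges the $nK$ sub-strips along the $y$-axis so that each target row $J_j$ is tiled by the sub-intervals coming from the sub-strips $(i,k)$ with $\pi_k(i)=j$; the total height required in row $j$ equals $\sum_i\sum_{k:\pi_k(i)=j}\lambda_k/n = \sum_i a_{ij}^{(n)}/n = 1/n$ by the column-sum property of $A^{(n)}$, so the tiling fits exactly and the $y$-margin is automatically uniform.

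By design, $C_n$ and $C$ assign equal mass to each grid cell and therefore agree at all grid corners $(i/n, j/n)$. Since both copulas are $1$-Lipschitz in each coordinate and every $(u,v)\in[0,1]^2$ is within distance $1/n$ of a grid corner in each argument, one obtains $\|C_n - C\|_\infty \le 4/n \to 0$, which proves the claim. The main technical obstacle is the verification step in the middle paragraph: translating the informal strip-rearrangement picture above into the formal definition of a shuffle of Min as in Section 3.2.3 of \cite{Nel06} requires specifying the sub-strip widths on both axes, the rearrangement permutation on $nK$ elements, and the (empty) collection of flipped strips, and then checking that the measure so defined coincides with the one constructed above. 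This bookkeeping is standard but notation-heavy; once it is in place, the Birkhoff--von Neumann decomposition is precisely what guarantees the existence of the required rearrangement, and the Lipschitz bound on the approximation error is immediate.
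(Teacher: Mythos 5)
Your construction is essentially correct, but note that the paper itself offers no proof of this statement: it is quoted as a known result of \cite{KimSam78} and \cite{MikShe92}, with a pointer to Theorem 3.2.2 of \cite{Nel06} for the special case $C=\Pi$, so the comparison is with the classical arguments rather than with anything proved in the paper. Your route is the standard grid-discretization proof with one extra, and in fact dispensable, ingredient: the Birkhoff--von Neumann decomposition. Since the matrix $A^{(n)}$ of normalized cell masses is doubly stochastic, you may simply subdivide each vertical strip $J_i$ into $n$ consecutive sub-intervals of widths $a_{i1}^{(n)}/n,\dots,a_{in}^{(n)}/n$ (i.e.\ the cell masses themselves) and send the $j$-th sub-interval diagonally into row $J_j$; the column-sum identity $\sum_i a_{ij}^{(n)}/n=1/n$ is already exactly what guarantees that the $y$-projections of these slope-one segments tile each row $J_j$, which is the only role your permutation-matrix decomposition plays. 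So Birkhoff--von Neumann buys nothing here beyond a different (and possibly finer, since several $\pi_k$ may map $i$ to the same $j$) subdivision into sub-strips; the classical construction is the cell-by-cell special case of your argument. The remaining steps are fine: equal cell masses force $C_n$ and $C$ to agree at the grid corners, and the Lipschitz property of copulas (with constant $1$ in each argument) gives $\|C_n-C\|_\infty\le 4/n$ (indeed $2/n$ if you move to the nearest corner only once), while the bookkeeping you defer --- checking that slope-one segments whose $x$- and $y$-projections each tile $[0,1]$, carrying uniform mass equal to their common projection length, define a shuffle of Min with no flipped strips --- is routine and unproblematic. In short, your proof is a valid, self-contained alternative to the cited literature, with the Birkhoff--von Neumann step being an unnecessary detour rather than an error.
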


A proof for the independence copula $C(u,v)=uv$ can be found in the monograph  \cite{Nel06}, see his Theorem 3.2.2. The result immediately implies the following fact from the introduction that has already been mentioned in \cite{MikShe92}.

\begin{corollary} \label{cor:R}
Suppose that the association measure $R$ satisfies (D) and (E') and is well-defined for all random vectors with standard uniform margins. Then $R$ does not satisfy (H). 
\end{corollary}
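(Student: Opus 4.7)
The plan is to apply Theorem~\ref{theo:shuffle} directly to the independence copula $\Pi(u,v)=uv$ and read off a counterexample to (H). First I would take $(X,Y)$ uniformly distributed on $[0,1]^2$, so that $X$ and $Y$ are independent with standard uniform margins and $(X,Y)\sim\Pi$. Axiom (D) then forces $R(X,Y)=0$. Next, Theorem~\ref{theo:shuffle} supplies a sequence of shuffles of Min $(C_k)_k$ with $\|C_k-\Pi\|_\infty\to 0$. For each $k$, a random vector $(X_k,Y_k)\sim C_k$ satisfies $Y_k=f_k(X_k)$ almost surely for some measurable $f_k$ by the defining feature of a shuffle of Min, and $X_k$ is standard uniform so that $R(X_k,Y_k)$ is well-defined by assumption; hence axiom (E') yields $R(X_k,Y_k)=1$ for every $k$.

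The remaining point is to argue that $(X_k,Y_k)\wconv (X,Y)$. Since all the copulas in question have fixed standard uniform margins, the bivariate cumulative distribution functions coincide with the copulas themselves on $[0,1]^2$. Uniform convergence $C_k\to\Pi$ on $[0,1]^2$ therefore implies pointwise convergence of the joint cdfs at every point of $[0,1]^2$, which is a continuity set of $\Pi$; outside $[0,1]^2$ the cdfs agree trivially by the margin constraint. Hence $(X_k,Y_k)\wconv (X,Y)$. Combining the three ingredients, $\lim_{k\to\infty} R(X_k,Y_k)=1\neq 0=R(X,Y)$, so $R$ violates (H).

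There is essentially no hard step: all the substance is packed into Theorem~\ref{theo:shuffle}, and the only minor verification is that uniform convergence of copulas implies weak convergence of the corresponding random vectors, which is immediate because the margins are kept fixed. The same construction, with $\Pi$ replaced by an arbitrary target copula, will also give the slightly sharper Corollary~\ref{cor:Xi} announced in the introduction.
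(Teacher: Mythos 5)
Your proof is correct and is exactly the argument the paper intends: it presents Corollary~\ref{cor:R} as an immediate consequence of Theorem~\ref{theo:shuffle}, with (D) giving $R=0$ at the independence copula, (E') giving $R=1$ along the shuffles of Min, and uniform convergence of the copulas (with fixed uniform margins) giving weak convergence. Only your closing aside slightly overreaches: Corollary~\ref{cor:Xi} asks for an arbitrary prescribed value $\xi_0\in[0,1]$ along the approximating sequence, which requires the mixture construction $\alpha C_k+(1-\alpha)\Pi$ of Proposition~\ref{prop:Xi}, not just replacing $\Pi$ by another target copula.
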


A more precise statement can be obtained for the special case of Chatterjee's coefficient, that is, for $R(X,Y)=\xi(X,Y)$. Corollary~\ref{cor:Xi} from the introduction is a straightforward consequence.

\begin{proposition} \label{prop:Xi}
For any pair of random variables $(X, Y)$ with continuous marginal cdfs and any $\xi_0 \in [\xi(X,Y),1]$, there exists a sequence of random variables $(X_k, Y_k)$ (with continuous marginal cdfs) weakly converging to $(X,Y)$ for $k\to\infty$ such that $\xi(X_k, Y_k)=\xi_0$ for all $k$.
\end{proposition}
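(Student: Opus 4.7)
The plan is to reduce to copulas via the probability integral transform, linearly interpolate between the copula $C$ of $(X,Y)$ and a shuffle of Min supplied by Theorem~\ref{theo:shuffle}, and then invoke the intermediate value theorem. The invariance of $\xi$ under strictly increasing transformations of the margins (immediate from (\ref{det1}) by a change of variables, together with the fact that conditioning on $X$ or on any strictly increasing function of $X$ generates the same $\sigma$-algebra) lets me assume without loss of generality that $(X,Y)=(U,V)$ has uniform margins with joint cdf equal to a copula $C$; Theorem~\ref{theo:shuffle} then supplies a sequence of shuffles of Min $C_k$ with $\|C_k - C\|_\infty \to 0$ and $\xi(C_k)=1$ for all $k$ by property (E$'$).

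Next I would interpolate linearly by setting $\nu_{k,\lambda} := (1-\lambda) C + \lambda C_k$ for $\lambda \in [0,1]$; each $\nu_{k,\lambda}$ is again a copula, as its marginals remain uniform. The decisive observation is that since $C$ and $C_k$ share the same uniform $U$-marginal, for $(U_{k,\lambda}, V_{k,\lambda}) \sim \nu_{k,\lambda}$ the conditional survival function decomposes linearly,
\begin{align*}
\Prob(V_{k,\lambda} \ge v \mid U_{k,\lambda} = u) = (1-\lambda)\, g_C(u,v) + \lambda\, g_{C_k}(u,v),
\end{align*}
with $g_C(u,v) = \Prob_C(V \ge v \mid U = u)$ and analogously for $g_{C_k}$. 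Plugging this into (\ref{det1}) and noting that the denominator equals $1/6$ irrespective of $\lambda$ since $V_{k,\lambda}$ is uniform, $\xi(\nu_{k,\lambda})$ becomes a quadratic polynomial in $\lambda$, hence continuous. Combined with the boundary values $\xi(\nu_{k,0}) = \xi(X,Y) \le \xi_0$ and $\xi(\nu_{k,1}) = 1 \ge \xi_0$, the intermediate value theorem produces $\lambda_k \in [0,1]$ with $\xi(\nu_{k,\lambda_k}) = \xi_0$.

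To conclude, I would set $(U_k, V_k) \sim \nu_{k,\lambda_k}$ and observe that $\|\nu_{k,\lambda_k} - C\|_\infty \le \|C_k - C\|_\infty \to 0$ yields $(U_k, V_k) \wconv (U,V)$; the desired sequence is then $(X_k, Y_k) := (F_X^{-1}(U_k), F_Y^{-1}(V_k))$, which has the prescribed continuous margins $F_X, F_Y$, satisfies $\xi(X_k, Y_k) = \xi_0$ by invariance, and converges weakly to $(X,Y)$ by the continuous mapping theorem, since the quantile functions $F_X^{-1}, F_Y^{-1}$ are continuous off an at most countable set of probability zero under the uniform distribution. The hardest step is the continuity of $\lambda \mapsto \xi(\nu_{k,\lambda})$; while the argument is clean once one spots the linear decomposition of conditional survival functions for copula mixtures, this linearity does rely on $C$ and $C_k$ having identical uniform $U$-marginal and would fail for arbitrary joint-distribution mixtures.
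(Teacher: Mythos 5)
Your proposal is correct and follows essentially the same route as the paper's proof: mix the copula $C$ of $(X,Y)$ with the shuffles of Min $C_k$ from Theorem~\ref{theo:shuffle}, observe that $\xi$ of the mixture is a quadratic (hence continuous) function of the mixing parameter with boundary values $\xi(X,Y)$ and $1$, and apply the intermediate value theorem, with weak convergence following from $\|\nu_{k,\lambda_k}-C\|_\infty\le\|C_k-C\|_\infty\to 0$. The only cosmetic difference is that you derive the quadratic dependence directly from the definition \eqref{det1} via the linear decomposition of the conditional survival functions, whereas the paper uses the equivalent formula $\xi = 6\|\partial_1 D\|_2^2-2$ of \cite{DetSto13}.
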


\begin{proof}[Proof of Proposition~\ref{prop:Xi}]
It is instructive to start with the case where $X$ and $Y$ are independent (i.e., Corollary~\ref{cor:Xi}).
Since $\xi$ is invariant under strictly monotone transformations of the margins, it is sufficient to show the statement for $(X,Y) \sim \Pi$, where $\Pi(u,v) = uv$ denotes the independence copula.

Fix $\xi_0 \in [0,1]$. For $\alpha \in [0,1]$, 
let $(X_k, Y_k) \sim D_k:=\alpha C_k + (1-\alpha) \Pi$, where $C_k$ is the copula from Theorem~\ref{theo:shuffle} applied to $C=\Pi$.
Clearly, $D_k$ is a copula that converges to $\Pi$ in uniform distance, for any choice of $\alpha$, and hence $(X_k, Y_k)$ converges weakly to $(X,Y)$. Therefore, it remains to show that we can choose $\alpha$ in such a way that $\xi(X_k, Y_k)=\xi_0$. For that purpose, let  $\| f\|_2$ denote the $L^2$-norm of a real-valued function on $[0,1]^2$. Note that the partial derivative $\partial_1 C (u,v) = \frac{\partial}{\partial u} C(u,v)$ exists for any copula $C$ and for almost any $(u,v) \in [0,1]^2$, that $\xi(X_k, Y_k) = 6 \| \partial_1 D_k \|_2^2 -2$ \citep{DetSto13} and that
\[
\| \partial_1 D_k \|_2^2
=
\alpha^2 \| \partial_1 C_k \|_2^2 + (1-\alpha)^2 \| \partial_1 \Pi \|_2^2 + 2 \alpha(1-\alpha) \int_0^1 \int_0^1 \partial_1 C_k(u,v) \partial_1 \Pi(u,v) \diff u \diff v.
\]
Here, for instance by Lemma 1 in \cite{DetSto13}, $\| \partial_1 C_k \|_2^2=1/2$  and $\| \partial_1 \Pi \|_2^2=1/3$. Moreover, since $\partial_1 \Pi(u,v) = v$, 
\[
\int_0^1 \int_0^1 \partial_1 C_k(u,v) \partial_1 \Pi(u,v) \diff u \diff v
=
\int_0^1 v \int_0^1 \partial_1 C_k(u,v) \diff u \diff v
= 
\int_0^1 v^2 \diff v = \frac13,
\]
where we have used that $C_k(1,v)=v$ and $C_k(0,v)=0$. As a consequence, 
\[
\| \partial_1 D_k \|_2^2 = \frac{\alpha^2}2 + \frac{(1-\alpha)^2}{3} + \frac{2\alpha(1-\alpha)}{3} = \frac{\alpha^2+2}6.
\]
Hence, the claim follows by choosing $\alpha=\sqrt{\xi_0}$.

Next, suppose that $(X,Y)$ is an arbitrary random vector with continuous margins. Without loss of generality, we may assume that the margins are standard uniform, so let 
$(X,Y)\sim C^*$ with $C^*$ an arbitrary copula. Define $\xi^*=\xi(X,Y)$ and fix $\xi_0 \in [\xi^*,1]$. 
Start by choosing $C_k$ converging to $C^*$ in sup-norm as in Theorem~\ref{theo:shuffle}, and define $D_k = \alpha_k C_k + (1-\alpha_k)C^*$, which also converges to $C^*$ in sup-norm. Following the above argumentation, we get
\begin{align}
\label{eq:fk}
f_k(\alpha_k) := \| \partial_1 D_k \|_2^2 = \frac{\alpha_k^2}2 + (1-\alpha_k)^2 \frac{\xi^*+2}6 + 2\alpha_k(1-\alpha_k) \int_0^1 \int_0^1 \partial_1 C_k(u,v) \partial_1 C^*(u,v) \diff u \diff v. 
\end{align}
Here, $f_k(0)=(\xi^*+2)/6$ and $f_k(1) = 1/2$ and $f_k$ is continuous on $[0,1]$. As a consequence, by the intermediate value theorem and noting that $(\xi_0+2)/6 \in [(\xi^*+2)/6, 1/2]$, we may choose $\alpha_k$ in such a way that $f_k(\alpha_k) = (\xi_0+2)/6$, which implies that $\xi(D_k) = \xi_0$ is constant in $k$.
\end{proof}

\begin{remark}
Some additional thoughts reveal that the range of the function $f_k$ in \eqref{eq:fk} is exactly the interval $[(\xi^*+2)/6, 1/2]$. In that sense, the proposition's assertion cannot be strengthened, at least not by the current proof.
\end{remark}

\section{Consequences for testing independence}
\label{sec:independence}

Suppose $(X_1, Y_1), \dots, (X_n, Y_n)$ is a random sample of $(X,Y)$, where the distribution of $Y$ is non-degenerate. Throughout, we are interested in testing hypotheses regarding Chatterjee's correlation coefficient $\xi(X,Y)$, for instance, for the simple null hypothesis $H_0: \xi(X,Y) =0$ of independence between $X$ and $Y$. 

Analyzing respective tests formally requires a statistical model for the data generating process, along with a precise definition of the null and alternative hypotheses.
For that purpose, it is instructive to identify $\xi(X,Y)$ with $\xi(P)$, where $P$ is the joint distribution of $(X,Y)$. Note that Chatterjee's coefficient may then be regarded as a functional $\xi:\mathcal P \to [0,1]$ defined on the set $\mathcal P$ of probability distributions on $\R^2$ whose second marginal distribution is non-degenerate. 

Subsequently, let $\mathcal P_*\subset \mathcal P$ denote a set of candidate models for the distribution of $(X,Y)$; for instance, the set of bivariate probability measures whose margins have a continuous cdf. A more precise formulation of the testing problem at the beginning of this section is then as follows: based on one observation from the product experiment defined by the sample space $\mathcal X_n = (\R^2)^n$ equipped with the distribution family $\mathcal P_*^{(n)} = \{ P^{\otimes n}: P \in \mathcal P_*\}$, with $P^{\otimes n}$ the $n$-fold product measure of $P$, decide between the hypothesis
\begin{align} \label{eq:hypo}
H_0: P \in \mathcal P_*(A_0)~~\text{ vs. }~~ H_1: P \in \mathcal P_*(A_1) ,
\end{align}
where $A_0$ and $A_1$ are disjoint subsets of $[0,1]$ and where $\mathcal P_*(A) = \{ P \in \mathcal P_* : \xi(P) \in A\}$. 

Heuristically, we should expect the largest power in a Neymann-Pearson-type situation where we know in advance that only the most extreme cases $\xi(P)=0$ or $\xi(P)=1$ are possible (i.e., $\mathcal P_* \subset \{ P \in \mathcal P: \xi(P) \in \{0,1\}\}$), and where we want to decide between the hypotheses 
\begin{align} \label{hol1} 
H_0: \xi(P)=0~~\text{ vs. }~~H_1: \xi(P) = 1;
\end{align}
note that this corresponds to $A_0=\{0\}$ and $A_1=\{1\}$ in \eqref{eq:hypo}. 
A possible test statistic, as proposed by \cite{Cha21}, is $\varphi_{\xi,n} = \bm 1(\sqrt {5n/2} |\hat \xi_n| \ge u_{1-\alpha/2})$ with the empirical Chatterjee coefficient $\hat \xi_n$ and $u_{1-\alpha}$ the $(1-\alpha/2)$-quantile of the standard normal distribution; this test has asymptotic level $\alpha$ by the limit results in the last-named paper. Alternatively, under the additional assumption of marginal continuity, we may take  $\varphi_{\xi,n, \text{exact}} = \bm 1(|\hat \xi_n| \ge u_{n,1-\alpha})$ with $u_{n,1-\alpha}$ chosen as the $1-\alpha$ quantile of the (finite-sample) null distribution of $|\hat \xi_n|$. Note that the latter is pivotal under continuity of the margins and can hence be approximated to an arbitrary precision based on Monte Carlo simulation; the resulting test then has exact (non-asymptotic) level $\alpha$. 

In a nutshell, the results in the following lemma imply that, for any sufficiently regular test $\varphi_n$ of size $\alpha$, there exists an alternative $P_1$ with $\xi(P_1) = 1$ such that the power of $\varphi_n$ is arbitrary close to $\alpha$. In other words, any such test is not even able to separate the `most opposite hypotheses possible' from \eqref{hol1}. Moreover, the issue does not disappear asymptotically by considering asymptotic sizes; any sufficiently regular sequence of tests cannot be uniformly consistent against $H_1$. Note that similar results were derived in \cite{For09}, Theorem 1, relying on arguments from \cite{Rom04}.

\begin{lemma}
\label{lem:indeptest}
Let $\mathcal P_* \subset \{ P \in \mathcal P: \xi(P) \in \{0,1\}\}$ denote a set of candidate models that includes all continuous bivariate distributions with $\xi(P) \in \{0,1\}$.
For $n\in \N$, suppose that $\varphi_n$ is a test for the hypotheses in \eqref{hol1} 
that is of the form $\varphi_n = \bm 1( T_n \ge 0)$ where $T_n: (\R^2)^n \to \R$ is continuous on the  set of all sample vectors in $(\R^2)^n$ for which there are no ties in any marginal sample. Then, 
\begin{align} \label{eq:test}
\inf_{P \in \mathcal P_*(\{1\})} \Exp_{P^{\otimes n}}[\varphi_n] 
\le 
\Exp_{P_0^{\otimes n}}[\varphi_n].
\end{align}
where $P_0=\mathrm{Uniform} ([0,1]^2)$.
\end{lemma}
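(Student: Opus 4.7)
The plan is to exhibit a sequence of alternatives $P_k \in \mathcal P_*(\{1\})$ along which the power of $\varphi_n$ converges to its level under $P_0 = \mathrm{Uniform}([0,1]^2)$. Since $P_0$ has copula $\Pi(u,v) = uv$, Theorem~\ref{theo:shuffle} supplies a sequence of shuffles of Min $C_k$ with $C_k \to \Pi$ uniformly. I would let $P_k$ denote the bivariate probability measure on $[0,1]^2$ with copula $C_k$ and uniform margins. Each $P_k$ is continuous, and because the support of a shuffle of Min is the graph of a bijective function, property (E') yields $\xi(P_k) = 1$, so $P_k \in \mathcal P_*(\{1\})$. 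Uniform convergence of the bivariate cdfs gives $P_k \dto P_0$ on $\R^2$, and hence $P_k^{\otimes n} \dto P_0^{\otimes n}$ on $(\R^2)^n$.

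Next, I would upgrade weak convergence to convergence of the test expectations by a Portmanteau argument. Set $A = \{T_n \ge 0\} \subseteq (\R^2)^n$ and let $N \subset (\R^2)^n$ denote the (closed) set of sample vectors with a marginal tie. Since $P_0$ and every $P_k$ have continuous margins, $P^{\otimes n}(N) = 0$ for all these laws. The key observation is that, because $T_n$ is continuous on the open set $N^c$, the set $A \cap N^c$ is closed in $N^c$; consequently any limit point of $A$ lies either in $A \cap N^c$ or in $N$, so $\overline A \setminus A \subseteq N$. Since $\overline A$ is closed, the Portmanteau theorem yields $\limsup_k P_k^{\otimes n}(\overline A) \le P_0^{\otimes n}(\overline A) = P_0^{\otimes n}(A)$, where the last identity uses $P_0^{\otimes n}(N) = 0$. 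Combined with the trivial bound $P_k^{\otimes n}(A) \le P_k^{\otimes n}(\overline A)$, this gives $\limsup_k \Exp_{P_k^{\otimes n}}[\varphi_n] \le \Exp_{P_0^{\otimes n}}[\varphi_n]$, and the infimum on the left-hand side of \eqref{eq:test} is bounded by this limsup.

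The genuinely delicate step is the Portmanteau application: the indicator $\bm 1(T_n \ge 0)$ is only upper semicontinuous, so one cannot directly pass to the limit of $\Exp_{P_k^{\otimes n}}[\varphi_n]$. The non-trivial input that rescues the argument is that $T_n$ is continuous off the tie set $N$, which carries zero mass under every candidate law with continuous margins; this is exactly what permits replacing $A$ by its closure $\overline A$ without altering the $P_0^{\otimes n}$-probability. All remaining ingredients---density of shuffles of Min, weak convergence of product measures, and the closed-set form of Portmanteau---are standard.
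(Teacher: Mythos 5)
Your proof is correct and follows essentially the same route as the paper: construct shuffle-of-Min alternatives with $\xi(P_k)=1$ converging weakly to $P_0$ (the paper gets these from Proposition~\ref{prop:Xi}, which for $\xi_0=1$ reduces to exactly your construction from Theorem~\ref{theo:shuffle} and (E')), then conclude by a closed-set Portmanteau argument exploiting that $T_n$ is continuous off the null set of tied samples. The only cosmetic difference is that you prove the a.e.-continuity transfer by hand via $\overline{A}\setminus A\subseteq N$, whereas the paper invokes the continuous mapping theorem for $P_k^{\otimes n}\circ T_n^{-1}$ and applies Portmanteau to $[0,\infty)$.
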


\begin{proof}
As a consequence of Proposition~\ref{prop:Xi}, there exists a sequence of probability measures $P_k \in \mathcal P_*$ such that $P_k \in \mathcal P_*(\{1\})$ and $P_k \weak P_0 \in \mathcal P_*(\{0\})$ for $k\to \infty$. In view of our assumption on $T_n$, the continuous mapping theorem implies that $P_k^{\otimes n} \circ T_n^{-1} \weak P_0^{\otimes n} \circ T_n^{-1}$ for $k \to \infty$.
Hence, for any given $\eps>0$, the Portmanteau theorem implies
\[
\Exp_{P_k^{\otimes n}}[\varphi_n] 
=
P_k^{\otimes n}(T_n \ge 0) 
\le 
P_0^{\otimes n}(T_n \ge 0) + \eps
=
\Exp_{P_0^{\otimes n}}[\varphi_n] + \eps
\]
by choosing $k$ sufficiently large.
This implies the assertion since $P_k\in\mathcal P_*(\{1\})$ and since $\eps>0$ was arbitrary.
\end{proof}

\begin{remark}~
\begin{compactenum}[(i)]

\item As a consequence of Lemma~\ref{lem:indeptest}, if $\varphi_n$ has (pointwise) level $\alpha$ (in fact, level $\alpha$ for continuous margins is sufficient), we obtain that $\inf_{P \in \mathcal P_*(\{1\})} \Exp_{P^{\otimes n}}[\varphi_n] \le \alpha $. Likewise, if $\varphi_n$ is a sequence of tests as in Lemma~\ref{lem:indeptest} with (pointwise) asymptotic level $\alpha$ (at $P_0$), we have
\begin{equation}
\label{det2}
\limsup_{n \to \infty } \inf_{P \in \mathcal P_*(\{1\})} \Exp_{P^{\otimes n}}[\varphi_n] \le \alpha,
\end{equation}
that is, for any $\eps>0$, there exists a sequence of alternatives $P_n$ with $\xi(P_n)=1$ such that $\Exp_{P_n^{\otimes n}}[\varphi_n] \le \alpha + \eps$ for all $n$.

\item The continuity assumption on $T_n$ is met for any continuous transformation $T_n = h_n(\hat \xi_n)$ of Chatterjee's coefficient $\hat \xi_n$. 
In particular, Lemma~\ref{lem:indeptest} applies to Chatterjee's asymptotic test $\varphi_{\xi, n} = \bm 1(\sqrt {5n/2} |\hat \xi_n| \ge u_{1-\alpha/2}) = \bm 1(\sqrt {5n/2} |\hat \xi_n| - u_{1-\alpha/2} \ge 0 )$. 
Moreover, it also applies to the test $\varphi_{\xi, n,M} = \bm 1(\sqrt {5n/2} |\hat \xi_{n,M}| \ge u_{1-\alpha/2}) $,
where $\hat \xi_{n,M}$ denotes the boosted version of Chatterjee's rank coefficient from \cite{linfan2022}; note that the continuity assumption is then a consequence of Remark 2 in their paper. As these test have pointwise asymptotic level $\alpha$, \eqref{det2} holds for $\varphi_n= \varphi_{\xi,n}$ and $\varphi_n=\varphi_{\xi,n,M}$.
    
\item \cite{ShiDrtHan22}, see also \cite{CaoBic20} and \cite{bickel2022}, provide a local power analysis for Chatterjee's asymptotic test $\varphi_{\xi,n} =  \bm 1(\sqrt {5n/2} |\hat \xi_n| \ge u_{1-\alpha/2})$. They find that, along specific local alternatives converging to the null hypothesis of independence, the tests are not rate optimal: they have trivial asymptotic power $\alpha$ against alternatives converging at rate $n^{-1/2}$. Our results complement their results, with the important difference that our alternatives are `maximally separated' from the null hypothesis as measured by $\xi$ (in that sense, they may be considered `non-local alternatives', despite that they are local with respect to the weak topology on the set of bivariate probability measures).

\end{compactenum}
\end{remark}

\section{Consequences for confidence intervals}
\label{sec:confint}

In the product experiment from the previous section, with $\mathcal P_*\subset \mathcal P$ a set of candidate models for the distribution of $(X,Y)$ that includes all distributions with continuous margins, let $C_n=[T_n^-, T_n^+]$ be a function from $(\R^2)^n$ to the set of closed intervals in $[0,1]$, where $T_n^- \le T_n^+$ are measurable. We either assume that $C_n$ is a confidence interval for $\xi(P)$ of level $1-\alpha$, that is,
\begin{align} \label{eq:ci-null}
P^{\otimes n}( \xi(P) \in C_n) \ge 1-\alpha \quad \forall P \in \mathcal P_*,
\end{align}
or that $C_n$ is a uniform asymptotic confidence interval for $\xi(P)$ of level $1-\alpha$, that is, 
\begin{align} \label{eq:ci-null-asy}
\liminf_{n \to \infty} 
\inf_{P \in \mathcal P_*} 
P^{\otimes n}( \xi(P) \in C_n) \ge 1-\alpha.
\end{align}
The following result is inspired by similar results in \cite{Duf97, Pfa98}.

\begin{lemma}
\label{lem:confint}
Fix $\alpha \in (0,1)$ and 
suppose that $T_n^\pm$ are continuous on the set of all sample vectors in $(\R^2)^n$ for which there are no ties in any marginal sample. 
Then \eqref{eq:ci-null} implies that, for any $P$ with continuous margins,
\begin{align} \label{eq:ci-bad1}
P^{\otimes n}( \xi_0 \in C_n) \ge 1- \alpha \quad \forall \xi_0 \in [\xi(P),1]
\end{align}
and 
\begin{align} \label{eq:ci-bad2}
P^{\otimes n}([\xi(P), 1] \subset C_n ) \ge 1-2 \alpha.
\end{align} 
Likewise, \eqref{eq:ci-null-asy} implies that, for any $P$ with continuous margins,
\begin{align} \label{eq:ci-bad1-asy}
\liminf_{n \to \infty} P^{\otimes n}( \xi_0 \in C_n) \ge 1- \alpha \quad \forall \xi_0 \in [ \xi(P),1]
\end{align}
and 
\begin{align} \label{eq:ci-bad2-asy}
\liminf_{n \to \infty} P^{\otimes n}( [\xi(P), 1] \subset C_n ) \ge 1-2 \alpha.
\end{align} 
\end{lemma}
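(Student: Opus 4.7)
The plan is to derive each of the four claims from Proposition~\ref{prop:Xi} via the continuous mapping theorem, handling the non-asymptotic pair first and then carrying the same scheme over to the asymptotic counterparts by a standard $\eps$-relaxation.

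To establish \eqref{eq:ci-bad1}, I would fix $P$ with continuous margins and $\xi_0 \in [\xi(P),1]$. Proposition~\ref{prop:Xi} supplies a sequence $(P_k)_k$ with continuous margins (hence $P_k \in \mathcal P_*$ by the lemma's standing hypothesis on $\mathcal P_*$) such that $P_k \weak P$ and $\xi(P_k) = \xi_0$ for every $k$. The coverage assumption \eqref{eq:ci-null} then gives $P_k^{\otimes n}(\xi_0 \in C_n) \ge 1-\alpha$, which rewrites as $P_k^{\otimes n}((T_n^-, T_n^+) \in A_{\xi_0}) \ge 1-\alpha$ for the closed set $A_{\xi_0} = \{(a,b) \in \R^2 : a \le \xi_0 \le b\}$. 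Since $(T_n^-, T_n^+)$ is continuous off the $P^{\otimes n}$-null set of marginal-tie configurations (ties have probability zero under marginal continuity of $P$), the continuous mapping theorem yields $P_k^{\otimes n}\circ (T_n^-, T_n^+)^{-1} \weak P^{\otimes n}\circ (T_n^-, T_n^+)^{-1}$, and Portmanteau on the closed set $A_{\xi_0}$ gives
\[
P^{\otimes n}(\xi_0 \in C_n) \;\ge\; \limsup_{k \to \infty} P_k^{\otimes n}(\xi_0 \in C_n) \;\ge\; 1-\alpha.
\]
For \eqref{eq:ci-bad2}, I would apply \eqref{eq:ci-bad1} separately at the two endpoints $\xi_0 = \xi(P)$ and $\xi_0 = 1$, obtaining $P^{\otimes n}(T_n^- \le \xi(P)) \ge 1-\alpha$ and $P^{\otimes n}(T_n^+ \ge 1) \ge 1-\alpha$, and then observe that $\{[\xi(P), 1] \subset C_n\}$ is precisely the intersection of these two events; a Bonferroni bound delivers the advertised probability $1-2\alpha$.

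The asymptotic claims \eqref{eq:ci-bad1-asy}--\eqref{eq:ci-bad2-asy} follow by exactly the same scheme with an $\eps$-relaxation: given $\eps > 0$, assumption \eqref{eq:ci-null-asy} produces an $n_0$ such that $P_k^{\otimes n}(\xi_0 \in C_n) \ge 1-\alpha-\eps$ for every $k$ and every $n \ge n_0$; fixing such an $n$ and letting $k \to \infty$ via the same Portmanteau step transfers the bound to $P$, so $\liminf_n P^{\otimes n}(\xi_0 \in C_n) \ge 1-\alpha-\eps$, and sending $\eps \downarrow 0$ yields \eqref{eq:ci-bad1-asy}; the Bonferroni step then gives \eqref{eq:ci-bad2-asy}. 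I do not anticipate any substantive obstacles: the only delicate step is the continuous-mapping/Portmanteau transfer from $P_k$ to $P$, and its validity is guaranteed by the lemma's continuity hypothesis on $T_n^\pm$ combined with the fact that Proposition~\ref{prop:Xi} can indeed be arranged to produce approximating laws $P_k$ with continuous margins (so that they lie in the model $\mathcal P_*$ stipulated at the start of Section~\ref{sec:confint}).
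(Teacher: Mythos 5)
Your proposal is correct and follows essentially the same route as the paper's proof: invoke Proposition~\ref{prop:Xi} to obtain laws $P_k$ with continuous margins, $\xi(P_k)=\xi_0$ and $P_k \weak P$, transfer the coverage bound to $P$ via the continuous mapping theorem and Portmanteau applied to the closed set $\{(a,b): a \le \xi_0 \le b\}$, and then combine the two endpoint bounds with a union bound, with the same scheme (your $\eps$-relaxation being just a slightly more explicit phrasing) for the uniform-asymptotic statements.
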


\begin{proof}
Fix $P$ with continuous margins and $\xi_0 \in [\xi(P),1]$ . We start by assuming \eqref{eq:ci-null}. Choose a sequence $P_k$ weakly converging to $P$ with continuous margins such that $\xi(P_k)=\xi_0$; this is possible by Proposition~\ref{prop:Xi}. Let $T_n$ denote the random vector $(T_n^-, T_n^+)$.
In view of the continuity assumption on $T_n^\pm$, we have $P_k^{\otimes n} \circ T_n^{-1} \weak P^{\otimes n} \circ T_n^{-1}$ for $k \to \infty$, by the continuous mapping theorem. Define $I(\xi_0) = \{(x,y) \in \R^2: x \le \xi_0 \le y \}$, which is a closed set in $\R^2$. Then, by the Portmanteau theorem, 
\begin{align} \label{eq:proof-ci1}
P^{\otimes n} (\xi_0 \in C_n) 
= 
P^{\otimes n} (T_{n}^- \le \xi_0 \le T_n^+)
&= \nonumber
P^{\otimes n} ( T_n \in I(\xi_0) )
\\&\ge \nonumber 
\lim_{k \to \infty} P_k^{\otimes n} ( T_n \in I(\xi_0) )
\\&=
\lim_{k \to \infty} P_k^{\otimes n} ( \xi(P_k) \in C_n ) \ge 1-\alpha,
\end{align}
where we used that $\xi(P_k)=\xi_0$ and \eqref{eq:ci-null}. This implies \eqref{eq:ci-bad1}.

Applying \eqref{eq:ci-bad1} with $\xi_0=\xi(P)$ and $\xi_0=1$ and invoking the union bound, we obtain that
\begin{align} \label{eq:proof-ci2}
P^{\otimes n} ( \{\xi(P),1\} \subset C_n) 
&= \nonumber
1- P^{\otimes n}(\xi(P) \notin C_n \text{ or } 1 \notin C_n)
\\&\ge  \nonumber
1 - P^{\otimes n} ( \xi(P) \notin C_n) - P^{\otimes n} ( 1 \notin C_n) 
\\&= 
P^{\otimes n} ( \xi(P) \in C_n) - P^{\otimes n} ( 1 \in C_n)-1 
\ge 1-2 \alpha.
\end{align}
Since $C_n$ is interval-valued, the event $ \{\xi(P),1\} \subset C_n$ is equivalent to the event $[\xi(P),1] \subset C_n$, which implies \eqref{eq:ci-bad2}.

Next, suppose that \eqref{eq:ci-null-asy} is met. The inequality chain in \eqref{eq:proof-ci1} implies
\[
P^{\otimes n} (\xi_0 \in C_n) 
\ge 
\lim_{k \to \infty} P_k^{\otimes n} ( \xi(P_k) \in C_n )
\ge 
\inf_{P \in \mathcal P_*} P^{\otimes n}( \xi(P) \in C_n).
\]
Taking the liminf on both sides implies \eqref{eq:ci-bad1-asy}. Likewise, \eqref{eq:ci-bad2-asy} follows from \eqref{eq:proof-ci2}.
\end{proof}

\begin{remark}~
\begin{compactenum}[(i)]
\item
As in the previous section, the continuity assumption on $T_n^\pm$ would for instance be met if $T_n^\pm = \hat \xi_n + u_n^\pm$, with Chatterjee's empirical coefficient $\hat \xi_n$ and some deterministic sequences $u_n^\pm$.

\item The first part of Lemma~\ref{lem:confint} essentially shows that meaningful confidence sets of exact level $\alpha$ cannot exist: for all $P$ with continuous margins, they must be unreasonably large with probability at least $1-2\alpha$. Likewise, the second part of the lemma shows that meaningful uniform asymptotic confidence intervals do not exist. 

However, we want to stress that `uniformity' in the asymptotic version is essential; there may  exist meaningful confidence intervals of (pointwise) asymptotic level $1-\alpha$ (which is in fact the more common notion of an asymptotic confidence interval). Such intervals may possibly be constructed by asymptotic theory \citep{LinHan22} or by subsampling \citep{DetKro24}.

\item Qualitatively similar results hold if the candidate set $\mathcal P_*$ is only required to include all distributions $P$ with continuous margins that satisfy $\xi(P) \le \kappa$, where $\kappa\in(0,1)$ is some fixed constant. The special case $\xi(P)=1$ is excluded from the condition in \eqref{eq:ci-null} and \eqref{eq:ci-null-asy} then, which seems natural in view of the fact that asymptotic normality of Chatterjee's sample correlation coefficient has only been shown for $\xi(P)<1$ \citep{LinHan22}. In that case, a careful look at the proof of Lemma~\ref{lem:confint} shows that the interval $[\xi(P),1]$ in \eqref{eq:ci-bad1}--\eqref{eq:ci-bad2-asy}  must be replaced by $[\xi(P), \kappa]$. This interval is still unreasonably large, and does not shrink for increasing sample size.
\end{compactenum}
\end{remark}

\section*{Acknowledgements} 
Financial support by the Deutsche Forschungsgemeinschaft (DFG, German Research Foundation; Project-ID 520388526;  TRR 391:  Spatio-temporal Statistics for the Transition of Energy and Transport) is gratefully acknowledged.

\bibliographystyle{apalike}
\bibliography{biblio}

\end{document}